\newtheorem{theorem}{Theorem}
\newtheorem{lemma}{Lemma}
\newtheorem{remark}{Remark}
\newenvironment{proof}{{\bf Proof:}}{\hfill$\bull$\medskip}
\newcommand{\bull}{\vrule height 1.8ex width 1.0ex depth 0ex}
\def\XXint#1#2#3{{\setbox0=\hbox{$#1{#2#3}{\int}$ }
\vcenter{\hbox{$#2#3$ }}\kern-.5\wd0}}
   \def\XXint#1#2#3{{\setbox0=\hbox{$#1{#2#3}{\int}$}
   \vcenter{\hbox{$#2#3$}}\kern-.5\wd0}}
\journal{journal}
\begin{document}
\begin{frontmatter}

% Title, authors and addresses

%% use the tnoteref command within \title for footnotes;
%% use the tnotetext command for the associated footnote;
%% use the fnref command within \author or \address for footnotes;
%% use the fntext command for the associated footnote;
%% use the corref command within \author for corresponding author footnotes;
%% use the cortext command for the associated footnote;
%% use the ead command for the email address,
%% and the form \ead[url] for the home page:
%\title{\textbf{Computation of highly oscillatory Bessel transforms with algebraic singularities}\tnoteref{label1}}
%\tnotetext[label1]{This paper was supported by NSF of China (No.~11371376) and
% by the Serbian Ministry of Education, Science and Technological Development (No. \#OI\,174015).}
%
%\author{Zhenhua Xu \fnref{cor1a}}
%\ead{xuzhenhua19860536@163.com}
%\fntext[cor1a]{College of Mathematics and Information Science, Zhengzhou University of Light Industry, Zhengzhou, Henan 450002, China.}
%\author{Shuhuang Xiang}\fnref{cor2a}}
%\ead{xiangsh@mail.csu.edu.cn}
%\fntext[cor2a]{School of Mathematics and Statistics, Central South University,Changsha, Hunan 410083, China.}
\title{\textbf{Computation of highly oscillatory Bessel transforms with algebraic singularities}\tnoteref{label1}}
\tnotetext[label1]{This work was supported by the Doctoral Scientific Research Foundation of Zhengzhou University of Light Industry
(No.~2015BSJJ071), and by the NSF of China (No.~11371376), and by the Innovation-Driven Project
and the Mathematics and Interdisciplinary Sciences Project of Central South University.}

\author{Zhenhua Xu\fnref{cor1a}}
\ead{xuzhenhua19860536@163.com}
\fntext[cor1a]{College of Mathematics and Information Science, Zhengzhou University of Light Industry, Zhengzhou, Henan 450002, China.}
\author{Shuhuang Xiang\fnref{cor2a}}
\ead{xiangsh@mail.csu.edu.cn}
\fntext[cor2a]{School of Mathematics and Statistics, Central South University, Changsha, Hunan 410083, China.}
%\author{Shuhuang Xiang\corref{cor1}\fnref{cor1a}}%%\corref{cor1}\fnref{fn1}}
%\ead{xiangsh@mail.csu.edu.cn}\cortext[cor1]{Corresponding
%author.}
%\author{Shuhuang Xiang\corref{cor1}\fnref{cor1a}}%%\corref{cor1}\fnref{fn1}}
%\ead{xiangsh@mail.csu.edu.cn}\cortext[cor1]{Corresponding
%author.}
%\author{Guo he}
%\ead{beibingyang009163@163.com}
%% \ead[url]{home page}
%%\fntext[label2]{E-mail: mohongmin@yahoo.cn(H. Mo)}
%\cortext[cor1]{Corresponding author}
%% \address{Address\fnref{label3}}
%\fntext[fn1]{Tel.:+86-743-8564884,Fax:+86-743-8564884}

%%\title{}

%% use optional labels to link authors explicitly to addresses:
%% \author[label1,label2]{}
%% \address[label1]{}
%% \address[label2]{}

%%\author{}
%\address{School of Mathematics and Statistics, Central South University, Changsha, Hunan 410083, China}
%%\address[focal]{Department of Mathematics,Jishou University,Jishou, Hunan 416000, P. R. China}

\begin{abstract}
  In this paper, we consider the Clenshaw-Curtis-Filon method for the
  highly oscillatory Bessel transform $\int_0^1x^\alpha (1-x)^\beta f(x) J_{\nu}(\omega x)dx$, where
  $f$ is a smooth function on $[0, 1]$, and $\nu\geq0.$ The method is based on Fast Fourier Transform (FFT) and fast
  computation of the modified moments. We give a recurrence relation for the modified moments and
  present an efficient method for the evaluation of modified moments by using recurrence relation. Moreover, the corresponding
  error bound in inverse powers of  $N$ for this method for the integral is presented.
  Numerical examples are provided to support our analysis and show the efficiency and accuracy of the method.
\end{abstract}

\begin{keyword}
Oscillatory Bessel transform \sep Recurrence relation \sep Clenshaw-Curtis-Filon method \sep  Modified moments \sep  Error bound.
%% PACS codes here, in the form: \PACS code \sep code

\MSC 65D32 \sep 65D30
%% or \MSC[2008] code \sep code (2000 is the default)

\end{keyword}
\end{frontmatter}
\section{\textbf{Introduction}} \setcounter{theorem}{0} \setcounter{equation}{0}
\hspace{0.1cm}
 The fast computation of highly oscillatory Bessel transforms
\begin{align}\label{eq:a1}
I[f]=\int_0^1x^\alpha (1-x)^\beta f(x) J_{\nu}(\omega x)dx,\quad  \nu\geq 0
\end{align}
where $\alpha>-1, \beta>-1$, and $f$ is a sufficiently smooth function on $[0,1]$, $J_\nu(x)$ is the Bessel function
of the first kind and order $\nu$, plays an important role in many areas of science
and engineering, such as astronomy, optics, quantum mechanics,
seismology image processing, electromagnetic scattering (for example \cite{Arfken}, \cite{Bao}, \cite{Davis}, \cite{Huybrechs1}).
In most of the cases, the oscillatory integrals with Bessel kernels can not be evaluated analytically and
one has to resort to numerical methods. Particularly, for large $\omega$, the integrands
become highly oscillatory. Hence, it presents serious difficulties
in approximating the integral  by classical numerical methods like Simpson rule, Gaussian quadrature, etc.

In recent years, there has been tremendous
interest in developing numerical methods for the integral $\int_a^bf(x)J_{\nu}(\omega x)dx$, such
as Levin method \cite{Levin1, Levin2}, Levin-type method \cite{Olver1}, modified  Clenshaw-Curtis method \cite{Piessens},
generalized quadrature rule \cite{Evans, Evans1}, Filon-type
method \cite{Xiang3}, Gauss-Laguerre quadrature\cite{Chen, Chen2}.
To avoid the Runge phenomenon, a Clenshaw-Curtis-Filon-type method was
presented in  \cite{Xiang}. Since, the integrand in (\ref{eq:a1}) may have singularities
 at two end points,
these methods cannot be applied to evaluating the integral (\ref{eq:a1}) directly.
Recently, a Filon-type method based on a special Hermite interpolation polynomial at
Clenshaw-Curtis points was introduced in \cite{Chen3}. The key issue is the computation
of modified moments. However, the algorithm \cite{Chen3} to evaluate the
modified moments by transferring the Chebyshev interpolation polynomial
into power series of $x^k$ is quite unstable for
the number of nodes $N\geq 32$.

In this paper, we are concerned with the Clenshaw-Curtis-Filon method  for the computation of the highly
oscillatory Bessel transform (\ref{eq:a1}) based on fast computation of modified moments.
Moreover, this method can be applied to the Filon-type method based on Clenshaw-Curtis points given in \cite {Chen3}.

This paper is organized as follows.
In Section \ref{sect:2},
we describe the Clenshaw-Curtis-Filon method for the integral (\ref{eq:a1}).
Meanwhile, we deduce a recurrence relation for the modified moments, and present an efficient algorithm for the moments. Error
analysis about  $N$ for the presented method for the integral (\ref{eq:a1}) is derived in
Section \ref{sect:3}. We show that the method converges uniformly in $N$ for fixed $\omega$.
In Section \ref{sect:4}, several numerical examples are given to illustrate the accuracy and efficiency of the presented method.

\section{\textbf{Clenshaw-Curtis-Filon method for the integral
(\ref{eq:a1})}} \label{sect:2} \setcounter{equation}{0} \setcounter{theorem}{0}
\setcounter{equation}{0} \setcounter{lemma}{0}
\setcounter{proposition}{0} \setcounter{corollary}{0}

%The idea of applying the Chebyshev series approximatios to numerical evaluation
%of integrals is due to Clenshaw-Curtis \cite{Clenshaw}. An extension of Clenshaw-Curtis quadrature
%is presented in \cite{Branders}. The Clenshaw-Curtis method has been developed in a considerable literature of
%papers.
Polynomial interpolation is used as one of the basic means of approximation in most areas of numerical analysis.  To avoid
Runge phenomenon, we consider a special Lagrange interpolation polynomial at the Clenshaw-Curtis points, instead of equally spaced points.
Suppose that $f$ is absolutely continuous on $[0,1]$, and let
$P_{N}(x)$ be the interpolant of $f$ at the Clenshaw-Curtis points
$$c_i=\frac{1}{2}+\frac{1}{2} \cos\left(\frac{i\pi}{N}\right),\textrm{\qquad} i=0,1,\ldots,N,$$
defined by $P_{N}(x)=\sum_{i=0}^{N}b_i T_i^{\ast}(x)$,
where $T_i^*(x)$ denotes the shifted Chebyshev polynomial of the first kind on $[0,1]$, and the coefficients $b_i$ can be evaluated by
FFT \cite{Dahlquist, Davis1, Trefethen1, Trefethen2}. The
Clenshaw-Curtis-Filon  method (CCF) for (\ref{eq:a1}) is defined by
\begin{eqnarray}\label{eq:b1}
Q^{CCF}[f]=\int_0^1P_{N}(x) x^\alpha (1-x)^\beta J_\nu(\omega  x)dx =\sum_{k=0}^{N}b_k M(k,\nu,\omega),
\end{eqnarray}
where
\begin{eqnarray}\label{eq:b2}
M(k,\nu,\omega)=\int_0^1x^\alpha (1-x)^\beta  T_k^{\ast}(x) J_\nu(\omega  x)dx,
\end{eqnarray}
denote the modified moments.

%\section{\textbf{\textbf{Fast computation of the modified moments}}}

\subsection{\textbf{A recurrence relation for $M(k,\nu,\omega)$}}\label{sect:2.1}
In the following, we present a recurrence relation for the modified moments $M(k,\nu,\omega)$.

\begin{theorem}\label{th:b1}
The sequence $M(k,\nu,\omega), k\geq 4$ satisfies the following recurrence relation:

\begin{equation}\label{eq:b3}
\begin{aligned}
\displaystyle &\frac{\omega^2}{16}M(k+4,\nu,\omega)+\Big[(k+3)^2-{\nu}^2-\frac{\omega^2}{4}+(\alpha+\beta)^2+(6+2k)(\alpha+\beta)\Big]M(k+2,\nu,\omega)\\
%\displaystyle &= \int_0^1\phi(x)(1-x)^{k}dF_1(x)\\
\displaystyle +&\Big[(4{\nu}^2+2k+4)-4(\alpha^2-\beta^2)-4k(\alpha-\beta)-8\alpha+12\beta\Big]M(k+1,\nu,\omega)\\
\displaystyle +&\Big[6(\alpha^2+\beta^2)+(4\alpha+12\beta-4\alpha\beta)-2k^2+6-6 {\nu^2}+\frac{3\omega^2}{8}\Big]M( {k},\nu,\omega)\\
%\displaystyle &= \int_0^1\phi_2(x)F_2(x)(1-x)^{k-2}dx\\
\displaystyle +&\Big[(4{\nu}^2-2k+4)-4(\alpha^2-\beta^2)-4k(\alpha-\beta)-8\alpha+12\beta\Big]M(k-1,\nu,\omega)\\
%\displaystyle &= \int_0^1\phi_{k-1}(x)F_{k-1}(x)(1-x)dx\\
\displaystyle +&\Big[(k-3)^2-{\nu}^2-\frac{\omega^2}{4}+(\alpha+\beta)^2+(6-2k)(\alpha+\beta)\Big]M(k-2,\nu,\omega)\\
\displaystyle +&\frac{\omega^2}{16}M(k-4,\nu,\omega)=0.
\end{aligned}
\end{equation}
\end{theorem}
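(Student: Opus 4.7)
The strategy I would pursue exploits the fact that $y(x)=J_\nu(\omega x)$ satisfies Bessel's equation
$$x^2 y''(x)+x y'(x)+(\omega^2 x^2-\nu^2)y(x)=0,$$
together with the three-term recurrence and the differential equation for the shifted Chebyshev polynomials. Concretely, I would choose the test function $\varphi_k(x)=x^{\alpha+2}(1-x)^{\beta+2}T_k^*(x)$. Because $\alpha>-1$ and $\beta>-1$, a direct check shows that $\varphi_k$, $x\varphi_k$, $x^2\varphi_k$, and the first derivatives $(x\varphi_k)'$ and $(x^2\varphi_k)'$ all vanish at both $x=0$ and $x=1$; this is precisely the boundary regularity required so that every boundary contribution in the integrations by parts below disappears.

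Starting from $\int_0^1 \varphi_k(x)\,[x^2 y''+xy'+(\omega^2 x^2-\nu^2)y]\,dx=0$, integrating by parts twice in the $y''$-term and once in the $y'$-term transfers all derivatives onto $\varphi_k$ and yields
$$\int_0^1\bigl[(x^2\varphi_k)''(x)-(x\varphi_k)'(x)+(\omega^2 x^2-\nu^2)\varphi_k(x)\bigr]\,y(x)\,dx=0.$$
Factor $\varphi_k(x)=w(x)\,h_k(x)$ with $w(x)=x^\alpha(1-x)^\beta$ and $h_k(x)=x^2(1-x)^2 T_k^*(x)$. Using $w'/w=\alpha/x-\beta/(1-x)$ and careful expansion, the bracketed expression divided by $w$ reduces to a \emph{polynomial} combination (in $x$) of $T_k^*$, $T_k^{*\prime}$ and $T_k^{*\prime\prime}$: the apparent singularities from $w'/w$ and $w''/w$ are absorbed by the $x^2(1-x)^2$ factor inside $h_k$. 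I would then eliminate $T_k^{*\prime\prime}$ by means of the shifted Chebyshev ODE
$$x(1-x)T_k^{*\prime\prime}+\tfrac{1}{2}(1-2x)T_k^{*\prime}+k^2 T_k^*=0,$$
and reduce the remaining $T_k^{*\prime}$ terms via the identity $4x(1-x)T_k^{*\prime}=k\bigl(T_{k-1}^*-T_{k+1}^*\bigr)$, iterated as necessary. Finally, repeated application of the three-term recurrence $xT_k^*=\tfrac{1}{4}T_{k+1}^*+\tfrac{1}{2}T_k^*+\tfrac{1}{4}T_{k-1}^*$ expands $x^m T_k^*$ for $m=1,2,3,4$ into a linear combination of $T_{k-m}^*,\dots,T_{k+m}^*$. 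Substituting back converts the integral identity into a linear relation among $M(k-4,\nu,\omega),\dots,M(k+4,\nu,\omega)$; after multiplication by a suitable constant this must coincide with (\ref{eq:b3}).

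The principal obstacle is purely computational: the manipulations above require careful tracking of many coefficients, and in particular one must verify that the coefficients of $T_{k\pm 3}^*$ cancel exactly, reflecting the absence of $M(k\pm 3,\nu,\omega)$ in the statement. I would perform this bookkeeping by hand with a symbolic cross-check and then read off the coefficients of $M(k+j,\nu,\omega)$ for $j\in\{-4,-2,-1,0,1,2,4\}$ from the resulting expansion to confirm agreement with (\ref{eq:b3}).
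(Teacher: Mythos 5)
Your overall strategy---multiply Bessel's equation by a weighted Chebyshev test function, integrate by parts to strip the derivatives off $J_\nu$, and expand via the Chebyshev product identities---is exactly the paper's (the paper works on $[-1,1]$, writes the resulting identity as $K_1+K_2-K_3=0$, and expands each piece with $x^mT_n=2^{-m}\sum_j\binom{m}{j}T_{n+m-2j}$). The gap is in your choice of test function. Multiplying $x^2y''+xy'+(\omega^2x^2-\nu^2)y=0$ by $\varphi_k(x)=x^{\alpha+2}(1-x)^{\beta+2}T_k^*(x)$ makes the zeroth-order term equal to
\begin{equation*}
x^\alpha(1-x)^\beta\cdot x^2(1-x)^2(\omega^2x^2-\nu^2)\,T_k^*(x),
\end{equation*}
a degree-$6$ polynomial multiple of $T_k^*$. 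Using $x^mT_k^*=4^{-m}\sum_{j=0}^{2m}\binom{2m}{j}T^*_{k+m-j}$, its expansion contains $T^*_{k+6}$ with coefficient $\omega^2/4^{6}$ and $T^*_{k+5}$ with coefficient $\omega^2/2^{10}$ (and symmetrically at $k-5,\,k-6$). Nothing from $(x^2\varphi_k)''$ or $(x\varphi_k)'$ can cancel these: after dividing by the weight $x^\alpha(1-x)^\beta$, those terms are polynomials of degree at most $k+4$. So the identity you obtain is a valid but strictly longer recurrence, with nonzero coefficients on $M(k\pm5,\nu,\omega)$ and $M(k\pm6,\nu,\omega)$; it cannot coincide with (\ref{eq:b3}) after rescaling, and the cancellation you flag as the key check (the $T^*_{k\pm3}$ coefficients) is not the one that matters---it is the $T^*_{k\pm5}$ and $T^*_{k\pm6}$ coefficients that must vanish, and in your setup they do not.

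The repair is to drop the superfluous factor $x^2$ and take the multiplier to be $x^\alpha(1-x)^{\beta+2}T_k^*(x)$. The factors $x^2$ and $x$ already present in Bessel's operator then provide the vanishing needed at $x=0$ for the two (respectively one) integrations by parts, since $\alpha+2>1$ and $\alpha+1>0$, while the inserted $(1-x)^2$ handles $x=1$. The zeroth-order polynomial becomes $(1-x)^2(\omega^2x^2-\nu^2)$, of degree $4$, so the recurrence spans exactly $M(k-4),\dots,M(k+4)$ as in (\ref{eq:b3}), and one checks that the $T^*_{k\pm3}$ coefficients cancel as you anticipated. This corrected multiplier is precisely the paper's choice, written there in the variable on $[-1,1]$ as the weight $4(1+x)^\alpha(1-x)^{\beta+2}T_k(x)$ against the Bessel operator in the argument $u=\tfrac{1+x}{2}\omega$.
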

\begin{proof}
Firstly, we rewrite the modified moments as
\begin{eqnarray}
M(k,\nu,\omega)=\frac{1}{2^{\alpha+\beta+1}}\int_{-1}^1(1+x)^\alpha(1-x)^\beta T_k(x)J_\nu\Big(\frac{1+x}{2}\omega\Big )dx,
\end{eqnarray}
where $T_k(x)$ is the Chebyshev polynomial of degree $k$.

Let
\begin{eqnarray}\label{eq:b4}
K_1=4\int_{-1}^1(1+x)^\alpha(1-x)^\beta (1-x)^2(1+x)^2 T_k(x)\bigg[J_\nu\Big(\frac{1+x}{2}\omega \Big)\bigg]^{\prime\prime}dx,
\end{eqnarray}
\begin{eqnarray}\label{eq:b5}
K_2=4\int_{-1}^1(1+x)^\alpha(1-x)^\beta (1-x)^2(1+x) T_k(x)\bigg[J_\nu\Big(\frac{1+x}{2}\omega \Big)\bigg]^{\prime}dx,
\end{eqnarray}
and
\begin{eqnarray}\label{eq:b6}
K_3=4\int_{-1}^1(1+x)^\alpha(1-x)^\beta (1-x)^2\Big(\nu^2-\frac{(1+x)^2\omega^2}{4}\Big)T_k(x)J_\nu\Big(\frac{1+x}{2}\omega\Big)dx.
\end{eqnarray}
According to the fact that \cite{Abram}
\begin{eqnarray}\label{eq:bessel}
x^2\frac{d^2 J_\nu(x)}{dx^2}+x\frac{d J_\nu(x)}{dx}+(x^2-\nu^2)J_\nu(x)=0,
\end{eqnarray}
we can easily obtain the equality
\begin{eqnarray}\label{eq:b7}
K_1+K_2-K_3=0.
\end{eqnarray}

Using the identity that \cite{Mason}
\begin{eqnarray}\nonumber
x^mT_n(x)=2^{-m}\sum_{j=0}^m\binom{m}{j}T_{n+m-2j}(x)
\end{eqnarray}
and integration by parts, we have
\begin{equation}\label{eq:b8}
\begin{aligned}
\displaystyle K_1=&\Big[(\alpha^2+\beta^2+k^2)+2(\alpha\beta+\alpha k+\beta k)+7(\alpha+\beta+k)+12\Big]M(k+2,\nu,\omega)\\
\displaystyle &+\Big[4(\beta^2-\alpha^2)+(4k+12)(\beta-\alpha)\Big]M(k+1,\nu,\omega)\\
\displaystyle &+\Big[6(\alpha^2+\beta^2)+10(\alpha+\beta)-4\alpha\beta-2k^2+8\Big]M(k,\nu,\omega)\\
%\displaystyle &= \int_0^1\phi_{k-1}(x)F_{k-1}(x)(1-x)dx\\
\displaystyle &+\Big[4(\beta^2-\alpha^2)+(12-4k)(\beta-\alpha)\Big]M(k-1,\nu,\omega)\\
\displaystyle &+\Big[(\alpha^2+\beta^2+k^2)+2(\alpha\beta-\alpha k-\beta k)+7(\alpha+\beta-k)+12\Big]M(k-2,\nu,\omega),
\end{aligned}
\end{equation}

\begin{equation}\label{eq:b9}
\begin{aligned}
\displaystyle K_2=&-\Big\{(\alpha+\beta+k+3)M(k+2,\nu,\omega)-(4+4\alpha+2k)M(k+1,\nu,\omega)+(6\alpha-2\beta+2)\\
\displaystyle &M(k,\nu,\omega)-(4+4\alpha-2k)M(k-1,\nu,\omega)+(\alpha+\beta-k+3)M(k-2,\nu,\omega)\Big\},
\end{aligned}
\end{equation}
and

\begin{equation}\label{eq:b10}
\begin{aligned}
\displaystyle K_3=&-\frac{1}{16}\bigg\{\omega^2M(k+4,\nu,\omega)-(4\omega^2+16\nu^2)M(k+2,\nu,\omega)+64\nu^2M(k+1,\nu,\omega)+\\
\displaystyle &(6\omega^2-96\nu^2)M(k,\nu,\omega)+64\nu^2M(k-1,\nu,\omega)-(4\omega^2+16\nu^2)M(k-2,\nu,\omega)\\
\displaystyle &+\omega^2M(k-4,\nu,\omega)\bigg\}.
\end{aligned}
\end{equation}
A combination of (\ref{eq:b7})-(\ref{eq:b10}) gives the desired result.
\end{proof}

\subsection{\textbf{Fast computations of the modified moments}}\label{sect:2.2}

Now, we turn to the fast computations of the modified moments via recurrence relation (\ref{eq:b3}).
Because of the symmetry of the recurrence relation of
the Chebyshev polynomials $T_j(x)$, it is convenient to define
$T_{-j}(x) =T_j(x), j=1, 2, \ldots$, then it holds that $M(-j,\nu,\omega)=M(j,\nu,\omega)$. It can be
shown that (\ref{eq:b3}) is not only valid for $k\geq 4$, but also for all integers of $k$. However, both
forward and backward recursion are asymptotically unstable \cite{Piessens}.
Fortunately, the instability is less pronounced if $\omega$ is large, and practical experiments  show
that the modified moments $M(j,\nu,\omega)$ can be computed accurately using forward recursion
if $k\leq\omega/2$. For the case $k\geq\frac{\omega}{2}$,
forward recursion is no longer feasible, since the loss of significant figures increases.
In this case, the moments can
be computed by Oliver's algorithm \cite{Oliver} with six starting
moments and two end moments. The end moments can be approximated by
converting  $M(k,\nu,\omega)$  into the form of Fourier integral and
using asymptotic expansion in \cite{Erdelyi}.
%The Oliver's algorithm
%can be fast implemented by applying LU factorization with $O(N)$ operations.

Using the explicit expression of the shifted Chebyshev polynomials \cite{Mason}
\begin{align*}
T_k^{\ast}(x)=T_{2k}(\sqrt{x})=\sum_{j=0}^kc_j^{(2k)} x^{k-j},
\end{align*}
where
\begin{eqnarray*}
c_j^{(2k)}={(-1)}^j2^{2k-2j-1}\Big[2\binom{2k-j}{j}-\binom{2k-j-1}{j}\Big],
\end{eqnarray*}
the first few modified moments can be evaluated efficiently as follows:
\begin{eqnarray}\label{eq:b11}
M(k,\nu,\omega)=\sum_{j=0}^kc_j^{(2k)}I(\alpha+k-j,\beta,\nu,\omega),
\end{eqnarray}
and $I(\alpha+k-j,\beta,\nu,\omega)=\int_0^1x^{\alpha+k-j}(1-x)^\beta J_\nu(\omega x)dx$ can be computed by the formula \cite[p. 681] {Gradshteyn}
%\begin{eqnarray}\label{eq:b12}
%\int_0^1x^a(1-x)^bJ_\nu(\omega x)dx&\\
%=&\frac{\Gamma(b+1)\Gamma(a+\nu+1)\leftidx{_2}{F}{_3}(\frac{a+\nu+1}{2},\frac{a+\nu+2}{2};
%\nu+1,\frac{a+b+\nu+2}{2},\frac{a+b+\nu+4}{2};-\frac{\omega^2}{4})}{2^\nu\omega^{-\nu}\Gamma(\nu+1)\Gamma(a+b+\nu+2)}
%\end{eqnarray}

\begin{equation}\label{eq:b12}
\begin{aligned}
\displaystyle &\int_0^1x^a(1-x)^bJ_\nu(\omega x)dx\\
\displaystyle =&\frac{\Gamma(b+1)\Gamma(a+\nu+1)\leftidx{_2}{F}{_3}(\frac{a+\nu+1}{2},\frac{a+\nu+2}{2};
\nu+1,\frac{a+b+\nu+2}{2},\frac{a+b+\nu+3}{2};-\frac{\omega^2}{4})}{2^\nu\omega^{-\nu}\Gamma(\nu+1)\Gamma(a+b+\nu+2)},
\end{aligned}
\end{equation}
where $\Re(a+\nu)>-1, \Re(b)>-1$, and $\leftidx{_2}{F}{_3}(\mu_1,\mu_2; \nu_1, \nu_2, \nu_3; z)$
denotes a class of generalized hypergeometric function.

\begin{remark}
Since $Y_\nu(x)$ and $J_\nu(x)$ are the solutions of the same differential equation, the integrals
$\int_0^1 x^\alpha (1-x)^\beta T_k^{\ast}(x)Y_\nu(\omega  x)dx, k=0, 1, \ldots ,$ also satisfy the same
recurrence relation (\ref{eq:b3}). Moreover, the function $Y_\nu(x)$ can be expressed by the equation \cite[p. 219] {Bateman}
\begin{eqnarray}\label{eq:re1}
\nonumber Y_{\nu}(x)= G^{2,0}_{1,3}\left(\begin{array}{c}-\frac{\nu}{2}-\frac{1}{2}\\ -\frac{\nu}{2}, \frac{\nu}{2}, -\frac{\nu}{2}-\frac{1}{2} \end{array}\Bigg|\frac{x^2}{4} \right).
\end{eqnarray}
According to the identity that \cite{Meijer}
 \begin{eqnarray}\label{eq:re2}
\nonumber & &\int_0^x t^{\alpha-1}(x-t)^{\beta-1}G^{m,n}_{p,q}\left(\begin{array}{c} a_1\ldots a_n, a_{n+1} \ldots a_p \\ b_1\ldots b_m, b_{m+1} \ldots b_q \end{array}\Bigg|\omega t^{l}\right)dt \\
&=&\frac{l^{-\beta}\Gamma(\beta)}{x^{1-\alpha-\beta}}G^{m,n+l}_{p+l,q+l}\left(\begin{array}{c} \frac{1-\alpha}{l},\ldots, \frac{l-\alpha}{l}, a_1\ldots a_n, a_{n+1} \ldots a_p \\ b_1\ldots b_m, b_{m+1} \ldots b_q, \frac{1-\alpha-\beta}{l},\ldots, \frac{l-\alpha-\beta}{l} \end{array}\Bigg|\omega x^{l}\right),
\end{eqnarray}
the first several starting values of the modified moments $\int_0^1 x^\alpha (1-x)^\beta T_k^{\ast}(x)
Y_\nu(\omega  x)dx, k=1, 2, \ldots$ can be evaluated by the following
formula
 \begin{eqnarray}\label{eq:re3}
\int_0^1 x^a (1-x)^b
Y_{\nu}(\omega x)dx=\frac{\Gamma(b+1)}{2^{b+1}}G^{2,2}_{3,5}\left(\begin{array}{c}-\frac{a}{2}, \frac{1-a}{2}, -\frac{\nu
}{2}-\frac{1}{2}\\ -\frac{\nu
}{2}, \frac{\nu
}{2}, -\frac{\nu
}{2}-\frac{1}{2}, -\frac{a+b+1}{2}, -\frac{a+b}{2}\end{array}\Bigg|\frac{1}{4}\omega^2\right)
\end{eqnarray}
by a similar way to the modified moments $M(k,\nu,\omega)$. As this idea is tangential
to the topic of this paper, we will not study it further.
\end{remark}
\section{\textbf{ Error analysis and uniform convergence for the Clenshaw-Curtis-Filon
method}}  \label{sect:3}\setcounter{equation}{0}\setcounter{theorem}{0}
\setcounter{lemma}{0}

In practical problems the frequency
$\omega$ is always fixed. To guarantee the convergence of the
method for the fixed $\omega$ with respect to the number
of quadrature nodes, we mainly focus
on the error bound about $N$ for fixed $\omega$ in this section.
We first introduce some lemmas.

%\begin{theorem}(\cite{Chen3}) \label{th:1}
%Assume that $f$ is a smooth function on $[0,1]$, then the error for the Clenshaw-Curtis-Filon-type
%method for the integral (\ref{eq:a1})satisfies
%\begin{equation}\label{eq:error1}
%\big|I[f]-Q_s^{C-C}[f]\big|=O(\omega^{-\tau-s}), \quad \omega\gg1,
%\end{equation}
%where $\tau=\min\Big\{\alpha+2,\beta+\frac{5}{2}\Big\}$.
%\end{theorem}

%From above Theorem , we can see  that the Clenshaw-Curtis-Filon-type method converges uniformly
%in $\omega$ for a fixed $N$ for the integral (\ref{eq:a1}).  In the following, we will consider the
%error estimate related to $N$ for the  Clenshaw-Curtis-Filon-type method for fixed $\omega$.

\begin{lemma}(\cite{Erdelyi})\label{le:1}
If $0<\lambda, \mu\leq1$, and $\phi(t)$ is $N$ times differentiable for $a \leq t\leq b$, then
\begin{equation}\label{eq:b13}
\int_a^b(t-a)^{\lambda-1}(b-t)^{\mu-1} e^{i r t}\phi(t)dt=B_N(r)-A_N(r)+O(r^{-N}),
\end{equation}
where
\begin{equation}\nonumber
\begin{aligned}
\displaystyle
A_N(r)&=\sum_{n=0}^{N-1}\frac{\Gamma({n+\lambda})}{n!}e^{i\pi(n+\lambda-2)/2}\omega^{-n-\lambda}
e^{ir a}\bigg[\frac{d^n}{dt^n}\big\{(b-t)^{\mu-1}\phi(t)\big\}\bigg]_{t=a},\\
\displaystyle B_N(r)&=\sum_{n=0}^{N-1}\frac{\Gamma({n+\mu})}{n!}e^{i\pi(n-\mu)/2}\omega^{-n-\mu}
e^{ir b}\bigg[\frac{d^n}{dt^n}\big\{(t-a)^{\lambda-1}\phi(t)\big\}\bigg]_{t=b}.
\end{aligned}
\end{equation}
\end{lemma}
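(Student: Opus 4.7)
\textbf{Proof proposal for Lemma \ref{le:1}.}

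The plan is to localize each endpoint singularity, reduce the problem to the two standard one-sided model integrals $\int_0^\infty s^{\alpha-1}e^{\pm irs}ds$, and then bound the remainder by $N$-fold integration by parts. Concretely, I would introduce a smooth cutoff $\chi\in C^\infty(\mathbb{R})$ with $\chi\equiv 1$ near $a$ and $\chi\equiv 0$ on $[(a+b)/2,b]$, and split the integrand as $\chi(t)+(1-\chi(t))$. This yields two pieces, each carrying an algebraic singularity at a single endpoint while the other factor $(b-t)^{\mu-1}$ or $(t-a)^{\lambda-1}$ becomes a $C^\infty$ function of $t$ on the relevant half-interval.

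For the $a$-side piece, I would change variables $s=t-a$ and expand $g(s):=\chi(a+s)(b-a-s)^{\mu-1}\phi(a+s)$ in its Taylor polynomial of degree $N-1$ about $s=0$ with integral remainder $R_N(s)$. Substituting into the integral gives a finite sum of ``model'' integrals $\int_0^{(b-a)/2} s^{n+\lambda-1}e^{irs}ds$, plus the remainder contribution $\int_0^{b-a} s^{\lambda-1}e^{irs}R_N(s)\,ds$. For each model integral I would extend to $\int_0^\infty$ (the tail is $O(r^{-N})$ by one integration by parts, since $\chi$ vanishes there) and evaluate the Mellin transform via contour rotation from the positive real axis to $i\mathbb{R}_+$, giving
\begin{equation*}
\int_0^{\infty} s^{n+\lambda-1}e^{irs}\,ds=\Gamma(n+\lambda)\,r^{-n-\lambda}e^{i\pi(n+\lambda)/2},
\end{equation*}
which after inserting the Taylor coefficient $\tfrac{1}{n!}[\tfrac{d^n}{dt^n}\{(b-t)^{\mu-1}\phi(t)\}]_{t=a}$ and restoring the factor $e^{ira}$ reproduces exactly $-A_N(r)$ (the overall minus sign comes from $e^{i\pi(n+\lambda-2)/2}=-e^{i\pi(n+\lambda)/2}$). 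The $b$-side piece is handled symmetrically by the substitution $s=b-t$, where the Taylor expansion of $\tilde g(s)=(1-\chi(b-s))(b-s-a)^{\lambda-1}\phi(b-s)$ about $s=0$ produces a factor $(-1)^n$ from $(t-b)^n=(-s)^n$; combining this with $(ir)^{-(n+\mu)}=r^{-n-\mu}e^{-i\pi(n+\mu)/2}$ and $(-1)^n e^{-i\pi(n+\mu)/2}=e^{i\pi(n-\mu)/2}$ yields precisely $B_N(r)$.

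The remaining task is to show the Taylor remainder contribution is $O(r^{-N})$. Since $R_N(s)$ vanishes to order $N$ at $s=0$, the function $s^{\lambda-1}R_N(s)$ is $N$-times differentiable on $[0,(b-a)/2]$ and its support stays away from $s=b-a$ once the $\chi$-factor is accounted for. Repeated integration by parts in $\int s^{\lambda-1}R_N(s)e^{irs}\,ds$, each step dividing by $ir$ and leaving a bounded boundary-free integrand (the boundary terms at $s=0$ vanish because of the $N$-fold zero of $R_N$, and at the upper limit because of $\chi$), produces the $O(r^{-N})$ bound. Combining the two sides gives the stated expansion.

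The main obstacle, in my view, is not any single calculation but the careful bookkeeping of phase factors and the justification of the contour rotation in the model integral when $\lambda,\mu\in(0,1]$, i.e. at the edge of convergence; I would handle this by introducing a regularizer $e^{-\epsilon s}$, doing the rotation for $\text{Re}(\epsilon-ir)>0$, and passing $\epsilon\to 0^+$ using the established absolute integrability of $s^{\alpha-1}e^{-\delta s}$ uniformly in $\delta\ge 0$ on compact subsets of $\{\text{Re}\,\alpha>0\}$. Everything else is routine Taylor expansion and integration by parts.
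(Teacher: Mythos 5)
The paper does not prove this lemma at all: it is quoted verbatim from Erd\'elyi's 1955 paper and used as a black box, so there is no internal proof to compare against. Your sketch is the classical neutralizer argument that underlies Erd\'elyi's result, and your phase bookkeeping is correct: with $\int_0^\infty s^{\alpha-1}e^{irs}\,ds=\Gamma(\alpha)r^{-\alpha}e^{i\pi\alpha/2}$ the $a$-side does reproduce $-A_N(r)$ via $e^{i\pi(n+\lambda-2)/2}=-e^{i\pi(n+\lambda)/2}$, and the $b$-side factor $(-1)^n e^{-i\pi(n+\mu)/2}=e^{i\pi(n-\mu)/2}$ gives $+B_N(r)$, matching the stated formula (modulo the paper's typo $\omega^{-n-\lambda}$ for $r^{-n-\lambda}$).

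There is, however, one genuine gap, and it is not the contour rotation you flag as the main obstacle (that part is routine with the $e^{-\epsilon s}$ regularizer). It is the parenthetical claim that extending each model integral $\int_0^{(b-a)/2}s^{n+\lambda-1}e^{irs}\,ds$ to $\int_0^\infty$ costs only $O(r^{-N})$ ``by one integration by parts, since $\chi$ vanishes there.'' The cutoff $\chi$ is not present in the model integrals: once you replace $g$ by its Taylor polynomial, the monomials $s^{n+\lambda-1}$ carry no cutoff, and one integration by parts on the tail $\int_{(b-a)/2}^\infty s^{n+\lambda-1}e^{irs}\,ds$ leaves a nonvanishing boundary term of size $O(r^{-1})$ at the artificial point $s=(b-a)/2$. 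As written, your decomposition therefore has an uncontrolled $O(r^{-1})$ error, which destroys the $O(r^{-N})$ remainder. These spurious contributions do cancel, but only after you recombine the tails with the Taylor remainder into $\int_0^\infty s^{\lambda-1}\bigl(G(s)-P_{N-1}(s)\bigr)e^{irs}\,ds$ (with $G$ the compactly supported extension of $g$) and integrate by parts $N$ times using antiderivatives taken ``from infinity'' along a rotated ray, which is essentially what Erd\'elyi does; alternatively one attaches a second cutoff to each Taylor monomial and shows $\int_0^\infty s^{n+\lambda-1}(1-\chi_2(s))e^{irs}\,ds=O(r^{-M})$ for all $M$. Either fix works but requires a different mechanism from the one you invoke. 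A second, minor imprecision: after $N$ integrations by parts the integrand $\frac{d^N}{ds^N}\{s^{\lambda-1}R_N(s)\}$ is $O(s^{\lambda-1})$ near $s=0$, hence integrable but not bounded for $\lambda<1$; the argument survives, but ``bounded'' is the wrong word and the boundary terms at $s=0$ need the explicit estimate $O(s^{N+\lambda-1-k})$ for the $k$th step.
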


%\begin{proof}
%There holds the identity that \cite[pp. 348] {Gradshteyn}
%\begin{equation}\label{eq:b14}
%\begin{aligned}
%\displaystyle &\int_a^b(x-a)^\alpha(b-x)^\beta e^{i\omega x}dx=\Big(\frac{b-a}{2}\Big)^{\alpha+\beta+1}e^{i\omega\frac{b+a}{2}}
%\int_{-1}^1(1+t)^\alpha(1-t)^\beta e^{i\omega \frac{b-a}{2}t }dt\\
%\displaystyle =&(b-a)^{\alpha+\beta+1}e^{i\omega a}B(1+\alpha,1+\beta)\leftidx{_1}{F}{_1}\Big(1+\alpha;\alpha+\beta+2;i(b-a)\omega\Big),
%\end{aligned}
%\end{equation}
%where $B(\mu,\nu)$ denotes the Beta function and $\leftidx{_1}{F}{_1}\big(\mu;\nu;z\big)$ denotes the Kummer confluent hypergeometric functions.
%
%For large values of $z$, there exists the asymptotic expansion for $\leftidx{_1}{F}{_1}\big(\mu;\nu;z\big)$ \cite[pp. 508] {Abram}
%\begin{equation}\label{eq:b15}
%\begin{aligned}
%\displaystyle \leftidx{_1}{F}{_1}\big(\mu;\nu;z\big)&=\frac{e^{\pm i\pi\mu} z^{-\mu}\Gamma(\nu)}{\Gamma(\nu-\mu)}
%\Bigg[\sum_{n=0}^{R-1}\frac{(\mu)_n(1+\mu-\nu)_n}{n!}(-z)^{-n}+O\Big(|z|^{-R}\Big)\Bigg]\\
%\displaystyle &=\frac{e^{z} z^{\mu-\nu}\Gamma(\nu)}{\Gamma(\mu)}
%\Bigg[\sum_{n=0}^{S-1}\frac{(\nu-\mu)_n(1-\mu)_n}{n!}(z)^{-n}+O\Big(|z|^{-S}\Big)\Bigg],
%\end{aligned}
%\end{equation}
%where the upper sign is taken if $-\frac{1}{2}\pi<\arg z<\frac{3}{2}\pi$, the lower sign is taken
%if $-\frac{3}{2}\pi<\arg z<\frac{1}{2}\pi$ .
%
%A combination of (\ref{eq:b14}) and (\ref{eq:b15}) leads to the desired result.
%\end{proof}

\begin{lemma}\label{le:2}
Suppose that $f\in C^{\kappa+1}[a,b]$, for each $\alpha>-1, \beta>-1$, and $r \gg1$, it holds that
\begin{equation}\label{eq:b16}
\int_a^b(x-a)^\alpha(b-x)^\beta f(x) e^{ir x}dx=O\Big(r^{-1-\min\big\{\alpha, \beta\big\}}\Big),
\end{equation}
where $\kappa=\Big\lceil\min\big\{\alpha,\beta\big\}\Big\rceil$, and $\big \lceil z \big\rceil$ denotes
the smallest integer not less than $z$.
\end{lemma}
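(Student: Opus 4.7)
The plan is to reduce the assertion to Lemma 3.1 (Erdélyi's lemma) by absorbing the integer parts of the exponents $\alpha$ and $\beta$ into the amplitude function. Without loss of generality suppose $\alpha = \min\{\alpha,\beta\}$, so the goal is to show the integral is $O(r^{-1-\alpha})$.

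First I would uniquely decompose $\alpha = p + \lambda - 1$ and $\beta = q + \mu - 1$ with nonnegative integers $p,q$ and $\lambda,\mu \in (0,1]$; note $p \le \lceil \alpha \rceil = \kappa$, so $f \in C^{\kappa+1}[a,b]$ guarantees enough smoothness for everything that follows. Setting $\phi(x) = (x-a)^p(b-x)^q f(x)$, the integrand is rewritten as $(x-a)^{\lambda-1}(b-x)^{\mu-1}\phi(x)\,e^{irx}$, which is precisely the form covered by Lemma 3.1. Applying that lemma with $N$ chosen large enough (e.g.\ $N > 1+\alpha$, which is within the available smoothness of $\phi$) produces the identity
\begin{equation*}
\int_a^b (x-a)^\alpha(b-x)^\beta f(x) e^{irx}\,dx = B_N(r) - A_N(r) + O(r^{-N}).
\end{equation*}

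The key step is estimating $A_N(r)$ and $B_N(r)$. For $A_N(r)$, each term involves $\bigl[\frac{d^n}{dt^n}\{(b-t)^{\mu-1}\phi(t)\}\bigr]_{t=a}$; but $\phi$ contains the factor $(t-a)^p$, so by Leibniz the first $p$ derivatives of $\phi$ vanish at $t=a$, and hence the first $p$ terms of $A_N$ are zero. The earliest potentially nonzero contribution occurs at $n=p$, which carries the weight $r^{-p-\lambda} = r^{-1-\alpha}$; all later terms are of strictly lower order. Therefore $A_N(r) = O(r^{-1-\alpha})$. An analogous argument using the $(b-t)^q$ factor at the endpoint $t=b$ yields $B_N(r) = O(r^{-1-\beta})$, which is at least as small as $O(r^{-1-\alpha})$. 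Combining these bounds with the $O(r^{-N})$ remainder gives the claimed estimate.

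The main obstacle will be the endpoint bookkeeping in the Leibniz expansion — verifying that the vanishing of $\phi^{(n)}(a)$ for $n<p$ indeed kills all lower-order terms of $A_N$ uniformly, and that the decomposition $\alpha = p+\lambda-1$ with $\lambda\in(0,1]$ together with the hypothesis $f\in C^{\kappa+1}$ supplies just enough regularity for both $\phi$ to be differentiated the requisite number of times and for the remainder in Lemma 3.1 to be subdominant. Once the integer-shift trick is in place, the rest is a careful order-of-magnitude accounting at the two endpoints.
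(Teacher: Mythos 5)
Your proposal is correct and follows essentially the same route as the paper: the same decomposition of $\alpha$ and $\beta$ into integer parts plus fractional parts in $(-1,0]$, the same absorption of the integer powers into the amplitude $F(x)=(x-a)^{N_1}(b-x)^{N_2}f(x)$, the same application of Lemma \ref{le:1}, and the same observation that the first $N_1$ (resp.\ $N_2$) terms of $A_N$ (resp.\ $B_N$) vanish so that the leading surviving term has order $r^{-N_1-k_1-1}=r^{-1-\alpha}$. The only cosmetic difference is that the paper treats the case $N_1=0$ separately, which your argument handles implicitly.
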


\begin{proof}
We only prove (\ref{eq:b16}) for the case $\alpha\leq\beta$, and the similar proof can be applied to
the case $\alpha > \beta$.

Assume that $\alpha=\lceil \alpha \rceil +k_1=N_1+k_1$ and $\beta=\lceil \beta \rceil +k_2=N_2+k_2$, we have
$N_1\leq N_2$ and $ -1<k_1, k_2\leq 0$. Then, it holds that
\begin{equation}\label{eq:b16a}
\begin{aligned}
\displaystyle
&\int_a^b(x-a)^\alpha(b-x)^\beta f(x) e^{ir x}dx\\
\displaystyle =&\int_a^b e^{ir x}(x-a)^{k_1}(b-x)^{k_2}(x-a)^{N_1}(b-x)^{N_2} f(x) dx\\
\displaystyle =&\int_a^b e^{ir x}(x-a)^{k_1}(b-x)^{k_2}F(x) dx,
\end{aligned}
\end{equation}
where $F(x)=(x-a)^{N_1}(b-x)^{N_2} f(x)$.

If $N_1=0$, it yields the desired result by
applying the Lemma \ref{le:2} to the first identity of equation (\ref{eq:b16a}) directly.

If $N_1\geq1$, it can be shown that
\begin{equation}\label{eq:b17}
\begin{aligned}
\displaystyle
\bigg[\frac{d^j}{dx^j}\big\{(b-x)^{k_2}F(x)\big\}\bigg]_{x=a}=0,\quad j=0,\ldots,N_1-1,
\end{aligned}
\end{equation}
and
\begin{equation}\label{eq:b18}
\begin{aligned}
\displaystyle
\bigg[\frac{d^j}{dx^j}\big\{(x-a)^{k_1}F(x)\big\}\bigg]_{x=b}=0,\quad j=0,\ldots,N_2-1.
\end{aligned}
\end{equation}
According to Lemma \ref{le:1}, a combination of (\ref{eq:b16a})-(\ref{eq:b18}) yields that
\begin{equation}\nonumber
\int_a^b(x-a)^\alpha(b-x)^\beta f(x) e^{ir x}dx=O\Big(r^{-N_1-k_1-1}\Big)=O\Big(r^{-1-\min\big\{\alpha, \beta\big\}}\Big).
\end{equation}
This completes the proof.
\end{proof}

\begin{lemma}\label{le:3}
For each $j\geq 1, \alpha>-1,\beta>-1$ and fixed $\omega$, it is true that
\begin{equation}\label{eq:b22}
\int_0^1x^\alpha (1-x)^\beta T_j^{\ast}(x)J_{\nu} (\omega x)dx=\left\{ \begin{aligned}
        &O(j^{-2}),~~~~~~~~~~~~~~~~~~\alpha=\beta=-\frac{1}{2} ,\\
                & O\Big(j^{-\min\big\{2, 2+2\alpha\big\}}\Big), \quad\beta=-\frac{1}{2},\alpha> -\frac{1}{2},\\
                 &O\Big(j^{-\min\big\{2, 2+2\beta\big\}}\Big), \quad\alpha=-\frac{1}{2}, \beta>-\frac{1}{2}, \\
                  &O\Big(j^{-2-2\min\big\{\alpha,\beta\big\}}\Big),\quad \mbox{otherwise}.
                          \end{aligned} \right.
\end{equation}
\end{lemma}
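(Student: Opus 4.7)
The plan is to pass to the variable $\theta$ via the Chebyshev substitution $x = \cos^{2}(\theta/2)$, apply Lemma \ref{le:2} directly in the generic case, and combine Lemma \ref{le:2} with one integration by parts in the three exceptional cases.

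Under $x = \cos^{2}(\theta/2) = (1+\cos\theta)/2$, the identities $T_{j}^{*}(x) = T_{2j}(\sqrt{x}) = T_{2j}(\cos(\theta/2)) = \cos(j\theta)$ and $dx = -\tfrac12\sin\theta\,d\theta$ transform the integral into
\[
I_{j} := \int_{0}^{1} x^{\alpha}(1-x)^{\beta} T_{j}^{*}(x) J_{\nu}(\omega x)\,dx = \int_{0}^{\pi} h(\theta) \cos(j\theta)\,d\theta,
\]
with $h(\theta) = \cos^{2\alpha+1}(\theta/2)\sin^{2\beta+1}(\theta/2) J_{\nu}(\omega\cos^{2}(\theta/2))$. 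Writing $\sin(\theta/2) = (\theta/2)\,\mathrm{sinc}(\theta/2)$ and $\cos(\theta/2) = ((\pi-\theta)/2)\,\mathrm{sinc}((\pi-\theta)/2)$, with both sinc factors smooth and positive on $[0,\pi]$, one gets $h(\theta) = \theta^{2\beta+1}(\pi-\theta)^{2\alpha+1}\tilde f(\theta)$ with $\tilde f \in C^{\infty}[0,\pi]$, reducing the estimate to an oscillatory integral with algebraic weight on $[0,\pi]$.

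For the ``otherwise'' case I split $\cos(j\theta) = \tfrac12(e^{ij\theta}+e^{-ij\theta})$ and apply Lemma \ref{le:2} to each half with exponents $2\alpha+1,\,2\beta+1 > -1$ and frequency $r = \pm j$; this yields directly $O(j^{-1-\min\{2\alpha+1,\,2\beta+1\}}) = O(j^{-2-2\min\{\alpha,\beta\}})$, as claimed. For the three exceptional cases at least one endpoint exponent is $0$ and Lemma \ref{le:2} alone gives only $O(j^{-1})$, so I integrate by parts once:
\[
I_{j} = \left[\frac{h(\theta)\sin(j\theta)}{j}\right]_{0}^{\pi} - \frac{1}{j}\int_{0}^{\pi} h'(\theta)\sin(j\theta)\,d\theta.
\]
The boundary term vanishes because $\sin(0) = \sin(j\pi) = 0$ for integer $j$ and $h$ stays bounded at both endpoints under $\alpha,\beta \geq -1/2$. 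When an exponent $2\alpha+1$ or $2\beta+1$ equals $0$ the corresponding $\cos^{0}$ or $\sin^{0}$ factor is identically $1$, so differentiation creates no new singularity there; hence $h'$ carries endpoint exponent $2\beta$ at $\theta=0$ and $2\alpha$ at $\theta=\pi$ (each read as $0$ when the original exponent was $0$), still strictly greater than $-1$ under $\alpha,\beta > -1/2$ in cases $2,3$, and both equal to $0$ in case $1$. Applying Lemma \ref{le:2} to the remaining integral and multiplying by $1/j$ then produces $O(j^{-2})$ in case $1$, $O(j^{-\min\{2,\,2+2\alpha\}})$ in case $2$, and $O(j^{-\min\{2,\,2+2\beta\}})$ in case $3$.

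The main obstacle is the case-by-case bookkeeping of the endpoint exponents of $h'$: one must verify admissibility for Lemma \ref{le:2} and identify when the relevant $\min$ equals $0$ (producing $j^{-2}$) versus $2\alpha$ or $2\beta$ (producing $j^{-2-2\alpha}$ or $j^{-2-2\beta}$), which is exactly what the min expressions in the statement encode. The factor $J_{\nu}(\omega\cos^{2}(\theta/2))$ is absorbed into $\tilde f$; for non-integer $\nu$ it vanishes like $(\pi-\theta)^{2\nu}$ at $\theta=\pi$, which only improves the bound and does not affect the worst-case rate stated.
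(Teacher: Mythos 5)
Your proof is correct and follows essentially the same route as the paper: the Chebyshev substitution turning $T_j^*$ into $\cos(j\theta)$ with algebraic endpoint weights $\theta^{2\beta+1}(\pi-\theta)^{2\alpha+1}$, a direct application of Lemma \ref{le:2} in the generic case, and one integration by parts followed by Lemma \ref{le:2} in the exceptional cases. The only cosmetic differences are that the paper rescales to $[0,\pi/2]$ with $\cos(2j\theta)$ and handles $\alpha=\beta=-\tfrac12$ by two integrations by parts rather than your one-plus-Lemma-\ref{le:2} variant; your explicit remark about the $(\pi-\theta)^{2\nu}$ behaviour of the Bessel factor is a point the paper silently glosses over.
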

\begin{proof}
By changing the variables $x=\frac{1+t}{2}$ and $t=\cos(\theta)$, it yields that
\begin{equation}\label{eq:b23}
\begin{aligned}
\displaystyle \int_0^1x^\alpha (1-x)^\beta T_j^{\ast}J_{\nu} (\omega x)dx&=\frac{1}{2}\int_0^\pi\sin(\theta)\cos^{2\alpha}(\theta/2)
\sin^{2\beta}(\theta/2)\cos(j\theta)J_\nu(\omega\cos^{2}(\theta/2))d\theta\\
\displaystyle &=2(-1)^j\int_0^{\frac{\pi}{2}}\sin^{2\alpha+1}(\theta)\cos^{2\beta+1}(\theta)\cos(2j\theta)J_\nu(\omega \sin^2(\theta))d\theta\\
\displaystyle &=\frac{(-1)^j}{j}\int_0^{\frac{\pi}{2}}\sin^{2\alpha+1}(\theta)\cos^{2\beta+1}(\theta)J_\nu(\omega \sin^2(\theta))d\sin(2j\theta).
\end{aligned}
\end{equation}

For the case $\alpha=\beta=-\frac{1}{2}$, we easily derive the first identity in (\ref{eq:b22})
by integrating (\ref{eq:b23}) by parts two times.

For the case $\beta=-\frac{1}{2},\alpha> -\frac{1}{2}$ or $\alpha=-\frac{1}{2}, \beta>-\frac{1}{2}$, we rewrite the
integral (\ref{eq:b23}) as
\begin{equation}\label{eq:b24}
\begin{aligned}
\displaystyle &\int_0^1x^\alpha (1-x)^\beta T_j^{\ast}(x)J_{\nu} (\omega x)dx\\
\displaystyle =&2(-1)^j\int_0^{\frac{\pi}{2}}\theta^{2\alpha+1}(\pi/2-\theta)^{2\beta+1}
\Big(\frac{\sin(\theta)}{\theta}\Big)^{2\alpha+1}\Big(\frac{\cos(\theta)}{\pi/2-\theta}\Big)^{2\beta+1}
J_\nu(\omega \sin^2(\theta))\cos(2j\theta)d\theta.
\end{aligned}
\end{equation}
By using integration by parts one time for the integral (\ref{eq:b24}) and Lemma \ref{le:2}, we can easily get the
the second and third identities in (\ref{eq:b22}).

For other cases, the fourth identity in (\ref{eq:b22}) can be obtained by applying the Lemma \ref{le:2}
to integral (\ref{eq:b24}) directly.
\end{proof}

\begin{theorem}\label{th:2}
Suppose that $f(x)$ has an absolutely continuous $(k-1)$st derivative $f^{(k-1)}$ on
$[0,1]$ and a $k$th derivative $f^{(k)}$ of bounded variation $V_k$ for some $k\geq1$.
Then, for each $\alpha>-1, \beta>-1, N\geq k+1$ and fixed $\omega$, the error
bound about $N$ for the Clenshaw-Curtis-Filon method
for the integral (\ref{eq:a1}) satisfies
\begin{equation}\label{eq:b25}
\begin{aligned}
\displaystyle I[f]-Q^{CCF}[f]=\left\{ \begin{aligned}
        &O\Big(N^{-2\min\big\{\alpha,\beta\big\}-k-2}\Big),\quad -1<\min\Big\{\alpha,\beta\Big\}<-\frac{1}{2}, \\
                & O\big(N^{-k-1}\big),~~~~~~~~~~~~~~~~~~\min\Big\{\alpha,\beta\Big\}\geq-\frac{1}{2}.
                          \end{aligned} \right.
\end{aligned}
\end{equation}
\end{theorem}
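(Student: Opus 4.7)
The plan is to write the error in terms of the shifted Chebyshev expansion of $f$, use the Clenshaw--Curtis aliasing identity to isolate two natural pieces, then apply Lemma \ref{le:3} to the resulting modified moments. Specifically, I would expand $f(x)=\sum_{j=0}^{\infty}a_j T_j^*(x)$; since $T_{2mN\pm j}^*(c_i)=T_j^*(c_i)$ at every Clenshaw--Curtis node, the interpolant can be written as $P_N(x)=\sum_{j=0}^{N}\widetilde a_j T_j^*(x)$ with
\begin{equation*}
\widetilde a_j - a_j \;=\; \sum_{m=1}^{\infty}\bigl(a_{2mN-j}+a_{2mN+j}\bigr)
\end{equation*}
(with the standard halving convention at $j=0,N$). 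Substituting this into $I[f]-Q^{CCF}[f]=\int_0^1 x^\alpha(1-x)^\beta(f-P_N)(x)J_\nu(\omega x)\,dx$ splits the error as
\begin{equation*}
I[f]-Q^{CCF}[f] \;=\; \sum_{j=N+1}^{\infty} a_j\,M(j,\nu,\omega) \;-\; \sum_{j=0}^{N}(\widetilde a_j-a_j)\,M(j,\nu,\omega) \;=:\; E_1 - E_2.
\end{equation*}

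Three ingredients then drive the estimate. First, the classical Chebyshev coefficient bound: under the smoothness hypothesis on $f^{(k)}$, Trefethen's inequality $|a_j|\le \frac{2V_k}{\pi j(j-1)\cdots(j-k)}$ for $j>k$ gives $|a_j|=O(j^{-k-1})$. Second, feeding this bound into the series for $\widetilde a_j-a_j$ and using $2mN-j\ge (2m-1)N$ for $0\le j\le N$ yields the uniform aliasing estimate $|\widetilde a_j-a_j|\le C\,N^{-k-1}$ with a constant independent of $j$. Third, by Lemma \ref{le:3}, $|M(j,\nu,\omega)|\le C\,j^{-s}$ for $j\ge 1$ with $s=2$ when $\min\{\alpha,\beta\}\ge -1/2$, and $s=2+2\min\{\alpha,\beta\}\in(0,1)$ when $-1<\min\{\alpha,\beta\}<-1/2$.

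With these in hand, the two pieces reduce to tail sums. For $E_1$, ingredients one and three give $|E_1|\le C\sum_{j>N} j^{-k-1-s}=O(N^{-k-s})$. For $E_2$, ingredient two gives $|E_2|\le C N^{-k-1}\sum_{j=0}^{N}|M(j,\nu,\omega)|$; in the smooth case the moment series converges, producing $O(N^{-k-1})$, while in the singular case $\sum_{j=0}^{N}|M(j,\nu,\omega)|=O(N^{1-s})$, again producing $O(N^{-k-s})$. Adding the two contributions and translating $-k-s$ back to the parameters $\alpha,\beta$ reproduces the two branches of the stated bound: $O(N^{-k-1})$ when $\min\{\alpha,\beta\}\ge -1/2$, and $O(N^{-2\min\{\alpha,\beta\}-k-2})$ otherwise. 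The main technical point I expect to need care with is the derivation of the aliasing formula and the uniform bound on $|\widetilde a_j-a_j|$; once that is in place the remaining work is exponent bookkeeping.
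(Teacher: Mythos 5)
Your proposal is correct and follows essentially the same route as the paper's proof: Chebyshev expansion of $f$, the coefficient decay bound $|a_j|\le \frac{2V_k}{\pi j(j-1)\cdots (j-k)}$, the Clenshaw--Curtis aliasing identities, and Lemma \ref{le:3} for the decay of the modified moments. Your split into a truncation piece $E_1$ and an aliasing piece $E_2$ with a uniform bound on $|\widetilde a_j-a_j|$ is just a re-indexing of the paper's double sum over $p$ and $j$ (whose $I[T^*_{N-j}]$ and $I[T^*_j]$ terms are exactly your $E_2$), so the two arguments coincide up to bookkeeping; the only nitpick is that for $\min\{\alpha,\beta\}\in[-1/2,0)$ Lemma \ref{le:3} gives a moment decay exponent $s=2+2\min\{\alpha,\beta\}\in[1,2)$ rather than $s=2$, but since $s>1$ the moment series still converges and your conclusion is unaffected.
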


\begin{proof}
According to the ideas of \cite{Xiang4, Xiang5}, the Chebyshev series for the function $f(x)$ can be expressed as  \cite[pp. 165] {Rivlin}
\begin{equation}\label{eq:b26}
f(x)=\sum^{\infty}_{j=0}{'}a_jT^\ast_j(x), \quad a_j=\frac{\pi}{2}\int_{-1}^1\frac{f(\frac{1+x}{2})T_j(x)}{\sqrt{1-x^2}}dx,
\end{equation}
where the prime indicates that the term with $j=0$ is multiplied by $1/2$.
Also, the coefficients $a_j$ satisfy \cite{Trefethen1, Trefethen2}
\begin{equation}\label{eq:b27}
|a_j|\leq \frac{2V_k}{\pi j (j-1)\cdot\cdot\cdot(j-k)}.
\end{equation}

For each $j=0,1,\ldots, N$, $p=1,2,\ldots$, from the property of Chebyshev polynomials \cite[p. 67] {Fox}, we can easily get
the aliasing errors for the integration on Chebyshev polynomials
\begin{equation}\label{eq:b28}
\begin{aligned}
\displaystyle P_N(T^\ast_{pN+j})=\left\{ \begin{aligned}
        &T^\ast_{N-j}, \quad \mbox{if \emph{p} is odd},\\
                & T^\ast_j, ~~~~~~~ \mbox{if \emph{p} is even},
                          \end{aligned} \right.
\end{aligned}
\end{equation}
and
\begin{equation}\label{eq:b29}
\begin{aligned}
\displaystyle Q^{CCF}[T^\ast_{pN+j}]=\left\{ \begin{aligned}
        &I[T^\ast_{N-j}], \quad \mbox{if \emph{p} is odd},\\
                & I[T^\ast_j], ~~~~~~~ \mbox{if \emph{p} is even}.
                          \end{aligned} \right.
\end{aligned}
\end{equation}

%Thus, form Lemma \ref{le:3} and (\ref{eq:b29}), we can deduce that the sums of
%aliasing errors for the for the Clenshaw-Curtis-Filon-type method can be estimated
%for $p$ being a positive integer by
%\begin{equation}\label{eq:b29b}
%\begin{aligned}
%\displaystyle \sum_{j=1}^N|I[T^\ast_{2pN+j}]|=\sum_{j=1}^N|I[T^\ast_j]|=\left\{ \begin{aligned}
%        &O(N^{-1-2\beta}),~ -1<\tau<\frac{1}{2},\\
%                &O(1), ~~~~~~~~~~~\tau\geq-\frac{1}{2} .
%                          \end{aligned} \right.
%\end{aligned}
%\end{equation}

%\begin{equation}\label{eq:b30}
%\begin{aligned}
%\displaystyle &|I[f]-Q^{C-C}[f]|\\
%\displaystyle \leq& \sum_{m=N+1}^\infty |b_m||I[T^\ast_m]-Q^{C-C}[T^\ast_m]|\\
%\displaystyle =&\sum_{p=1}^\infty\sum_{j=1}^N\Big(|b_{2pN+j}||I[T^\ast_{2pN+j}]-I[T^\ast_j]|+|b_{(2p-1)N+j}||I[T^\ast_{(2p-1)N+j}]-I[T^\ast_j]|\Big),
%\end{aligned}
%\end{equation}
%where $I[T^\ast_j]=\int_0^1x^\alpha (1-x)^\beta T_j^{\ast}(x)J_{\nu} (\omega x)dx$.

For the case $\min\big\{\alpha,\beta\big\}\geq -1/2$, according to Lemma \ref{le:3}, there
exists two constants $C$ and $\sigma> -1/2$ such that $|I[T^\ast_j]|\leq C j^{-2-2\sigma}$.
Thus, we have the following estimate:
\begin{equation}\label{eq:b31}
\begin{aligned}
\displaystyle &|I[f]-Q^{CCF}[f]|\\
\displaystyle \leq& \sum_{m=N+1}^\infty |a_m||I[T^\ast_m]-Q^{CCF}[T^\ast_m]|\\
\displaystyle =&\sum_{p=1}^\infty\sum_{j=1}^N\Big(|a_{2pN+j}||I[T^\ast_{2pN+j}]-I[T^\ast_j]|+|a_{(2p-1)N+j}||I[T^\ast_{(2p-1)N+j}]-I[T^\ast_{N-j}]|\Big)\\
\displaystyle \leq&\sum_{p=1}^\infty\sum_{j=1}^N\Bigg(\frac{2V_k\Big(\frac{C}{(2pN+j)^{2+2\sigma}}+\frac{C}{j^{2+2\sigma}}\Big)}{\pi(2pN+j)(2pN+j-1)\cdot\cdot\cdot(2pN+j-k)}\\
\displaystyle &+\frac{2V_k\Big(\frac{C}{((2p-1)N+j)^{2+2\sigma}}+\frac{C}{(N-j)^{2+2\sigma}}\Big)}{\pi((2p-1)N+j)((2p-1)N+j-1)\cdot\cdot\cdot((2p-1)N+j-k)}\Bigg)\\
\displaystyle <&\frac{2CV_k}{\pi N(N-1)\cdot\cdot\cdot(N-k)}\sum_{p=1}^\infty\Big(\frac{\hat{C}}{p^{k+2+2\sigma}N^{1+2\sigma}}+\frac{\tilde{C}}{{p^{k+1}}}\Big)\\
\displaystyle =&\frac{2CV_k}{\pi N(N-1)\cdot\cdot\cdot(N-k)}\Big(\frac{\zeta(k+2+2\sigma)\hat{C}}{N^{1+2\sigma}}+{\tilde{C}\zeta(k+1)}\Big),
\end{aligned}
\end{equation}
where $\zeta(k)$ is the zeta function defined by $\zeta(k)=\sum_{p=1}^\infty \frac{1}{p^k}$, and we use the following
estimates
\begin{equation}
\begin{aligned}\nonumber
\displaystyle \sum_{j=1}^N\frac{1}{(2pN+j)^{2+2\sigma}}&<\int_0^N\frac{1}{(2pN+x)^{2+2\sigma}}dx=\frac{1+2\sigma}{(2pN)^{1+2\sigma}}-\frac{1+2\sigma}{(2pN+N)^{1+2\sigma}}\\
\displaystyle &<\frac{1+2\sigma}{(2pN)^{1+2\sigma}}-\frac{1+2\sigma}{(4pN)^{1+2\sigma}}=\Big(\frac{1+2\sigma}{2^{1+2\sigma}}-\frac{1+2\sigma}{4^{1+2\sigma}}\Big)\frac{1}{(pN)^{1+2\sigma}}\\
\displaystyle &=\hat{C}\frac{1}{(pN)^{1+2\sigma}},
\end{aligned}
\end{equation}
and
\begin{equation}
\nonumber \sum_{j=1}^N\frac{1}{((2p-1)N+j)^2}<\frac{1}{(2p-1)^2N}, \quad \sum_{j=1}^N\frac{1}{j^{2+2\sigma}}<\sum_{j=1}^\infty\frac{1}{j^{2+2\sigma}}=\tilde{C}.
\end{equation}
Then the second  identity in (\ref{eq:b25}) holds.

By a similar way, we can easily
derive the first identity in (\ref{eq:b25}) for the case $-1<\min\big\{\alpha,\beta\big\}<-\frac{1}{2}$.
\end{proof}

Theorem \ref{th:2} shows that Clenshaw-Curtis-Filon method (\ref{eq:b1}) for the integral
(\ref{eq:a1}) converges uniformly in $N$ for fixed $\omega$.

\begin{remark}\label{re:2}
From the proof of the Theorem \ref{th:2}, we can see that if $f\in X^k$, equation (\ref{eq:b25}) also holds, where $X^k$ denotes the spaces of
the functions whose Chebyshev coefficients decay asymptotically as $a_j=O(j^{-k-1})$ for some positive $k$. Additionally, let ${Q}^{C-C-F}$ denote the  Clenshaw-Curtis-Filon method
for the integral $\bar{I}[f]=\int_a^b f(x)(x-a)^\alpha(b-x)^\beta e^{i\omega x}dx$ in \cite{Kang}, by a similar proof of
the Theorem \ref{th:2}, we can show that if $f$ satisfies the condition of Theorem \ref{th:2} or $f\in X^k$, for the fixed $\omega$, there also holds
\begin{equation}\label{eq:b25b}
\begin{aligned}
\displaystyle \bar{I}[f]-{Q}^{C-C-F}[f]=\left\{ \begin{aligned}
        &O\Big(N^{-2\min\big\{\alpha,\beta\big\}-k-2}\Big),\quad -1<\min\Big\{\alpha,\beta\Big\}<-\frac{1}{2}, \\
                & O\big(N^{-k-1}\big),~~~~~~~~~~~~~~~~~~\min\Big\{\alpha,\beta\Big\}\geq-\frac{1}{2}.
                          \end{aligned} \right.
\end{aligned}
\end{equation}
This result will be illustrated by several examples in Section \ref{sect:4} (see Figures \ref{fig:2}-\ref{fig:2a}).

\end{remark}

\begin{remark}\label{re:3}
From Theorem 2.2 in \cite{Chen3}, we can see that the error bound about $\omega$ for fixed $N$ for the
Clenshaw-Curtis-Filon method for the integral (\ref{eq:a1}) satisfies
\begin{equation*}
\big|I[f]-Q^{CCF}[f]\big|=O(\omega^{-\tau}), \quad \omega\gg1,
\end{equation*}
where $\tau=\min\Big\{\alpha+2,\beta+\frac{5}{2}\Big\}$.
Moreover, based on efficient evaluation of the modified moments, this method can be
applied to the Filon-type method based on Clenshaw-Curtis points \cite{Chen3}. It should be
noted that
the algorithm \cite{Chen3} to evaluate the
modified moments by transferring the Chebyshev interpolation polynomial
into power series of $x^k$ is quite unstable for
the number of nodes $N\geq 32$.
% while our method for the modified moments
%based on Oliver's algorithm with $O(N)$ operations is numerically stable.
\end{remark}
\section{Numerical example}\label{sect:4}
\setcounter{theorem}{0}   \setcounter{equation}{0} \setcounter{lemma}{0} \setcounter{proposition}{0}
\setcounter{corollary}{0}

In this section, we will test several
numerical examples to illustrate the efficiency and accuracy
of the method (\ref{eq:b1}). The
values assumed to be accurate are computed in the {\sc Maple
14} using the 32 decimal digits precision arithmetic. The
experiments are performed by using R2012a version of the {\sc Matlab} system.

{\bf Example 1.} Consider the following integrals
\begin{equation}\label{eq:d1}
I_1[f]=\int_0^1|x-0.5|^k x^\alpha(1-x)^\beta J_0(\omega x)dx,
\end{equation}
and
\begin{equation}\label{eq:d1a}
I_2[f]=\int_0^1|x-0.5|^k x^\alpha(1-x)^\beta e^{i\omega x}dx,
\end{equation}
by Clenshaw-Curtis-Filon method, where $f(x)=|x-0.5|^k$ with $k=1$ and $3$,
$\alpha=0.2, \beta=0.4$. According to Theorem \ref{th:2} and Remark \ref{re:2}, it
yields the overall estimate $O(N^{-k-1})$
for the absolute error. Figures \ref{fig:1}-\ref{fig:2} illustrate the convergence rates
for $(N+1)$-point Clenshaw-Curtis-Filon method for the integrals
(\ref{eq:d1}) and (\ref{eq:d1a}).
As can be seen,  the asymptotic order on $N$ for fixed $\omega$ in Theorem \ref{th:2} is attainable
for the function $f(x)=|x-0.5|^k$ of limited regularity.

{\bf Example 2.} Consider the following integrals
\begin{equation}\label{eq:d1b}
I_3[f]=\int_0^1 (1-x^2)^{0.8} x^\alpha(1-x)^\beta J_0(\omega x)dx,
\end{equation}
and
\begin{equation}\label{eq:d1c}
I_4[f]=\int_0^1(1-x^2)^{0.8} x^\alpha(1-x)^\beta e^{i\omega x}dx,
\end{equation}
by Clenshaw-Curtis-Filon method, where $f(x)=(1-x^2)^{0.8}$ with
$\alpha=-0.8, \beta=-0.9$. From  \cite{Wang}, we see that $f \in X^{1.6}$. According to Theorem \ref{th:2} and Remark \ref{re:2}, it
yields the overall estimate $O(N^{-3.6-2\min\{\alpha,\beta\}})$
for the absolute error. Figure \ref{fig:2a} illustrates the convergence rates
for $(N+1)$-point Clenshaw-Curtis-Filon method for the integrals
(\ref{eq:d1b}) and (\ref{eq:d1c}).
As can be seen,  the asymptotic order on $N$ for fixed $\omega$ in Theorem \ref{th:2} is attainable
for the function $f(x)=(1-x^2)^{0.8}$.

\begin{figure}[hbtp]
\begin{center}
\includegraphics[scale=0.42]{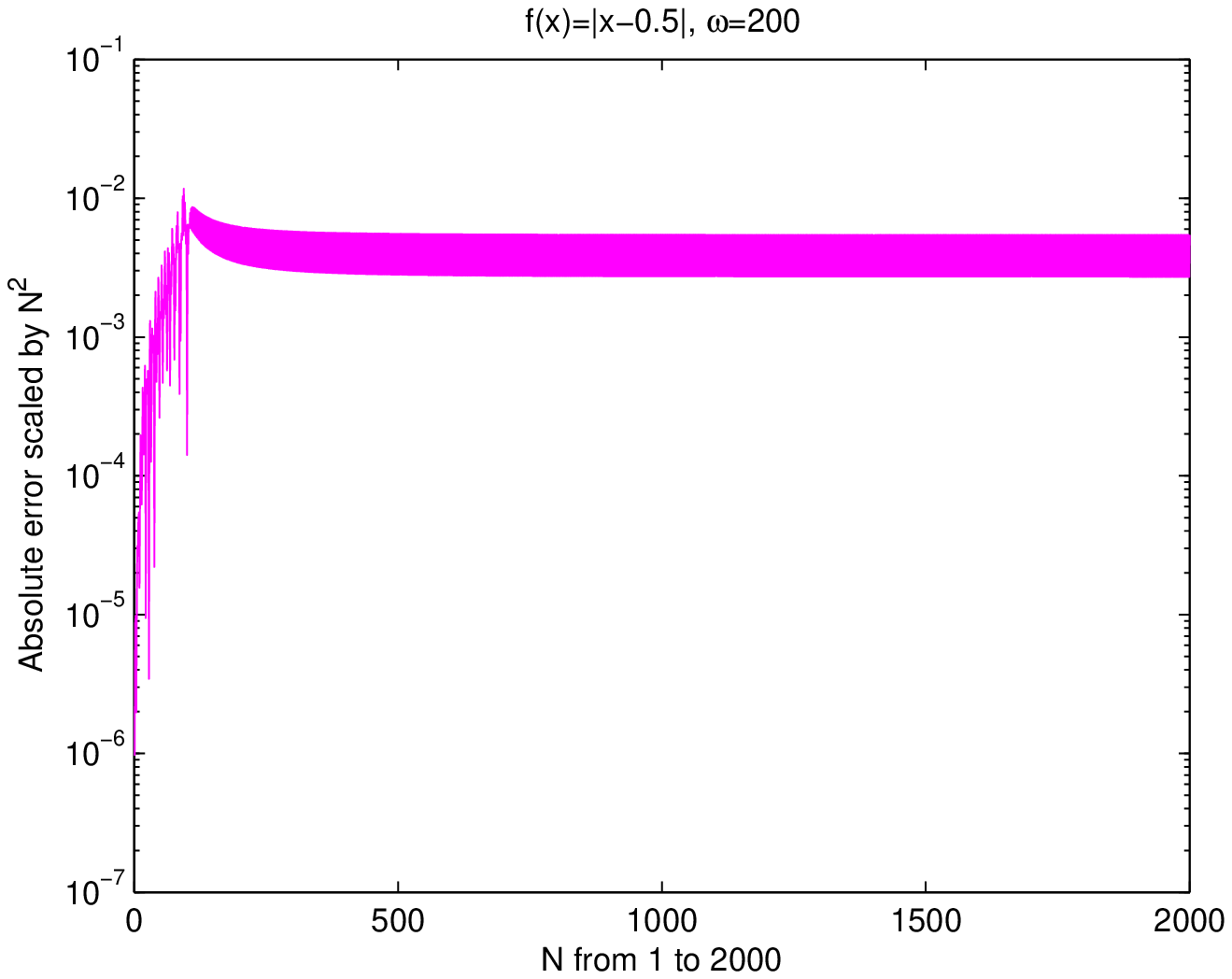}
\includegraphics[scale=0.42]{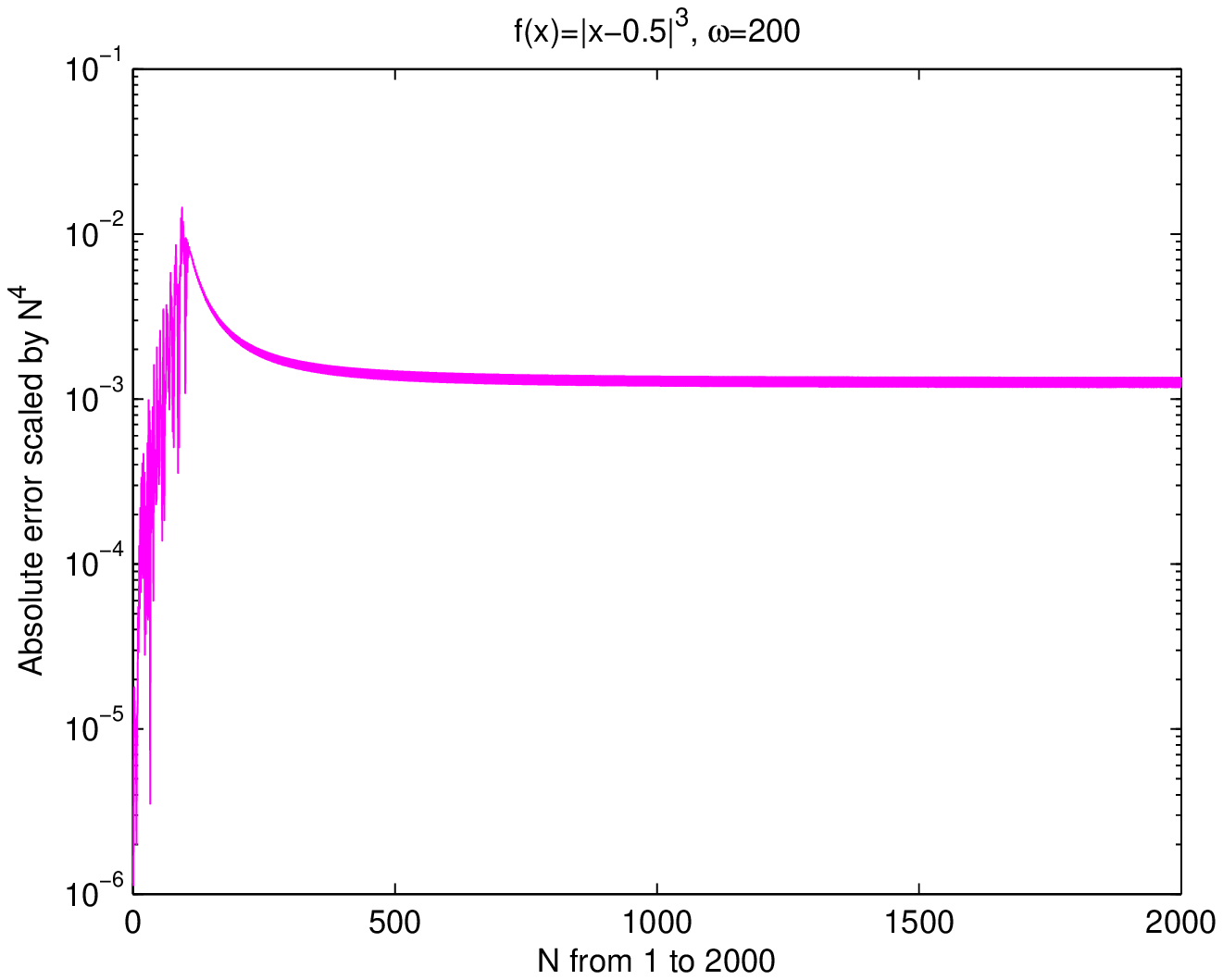}
\caption{Absolute  errors scaled by
$N^2$ (left) and $ N^4$ (right) for the integral (\ref{eq:d1})
for the Clenshaw-Curtis-Filon method with
$\omega=200$, $\alpha=0.2, \beta=0.4$, $N$ from $1$ to $2000$.}
\label{fig:1}
\end{center}
\end{figure}
\begin{figure}[hbtp]
\begin{center}
\includegraphics[scale=0.42]{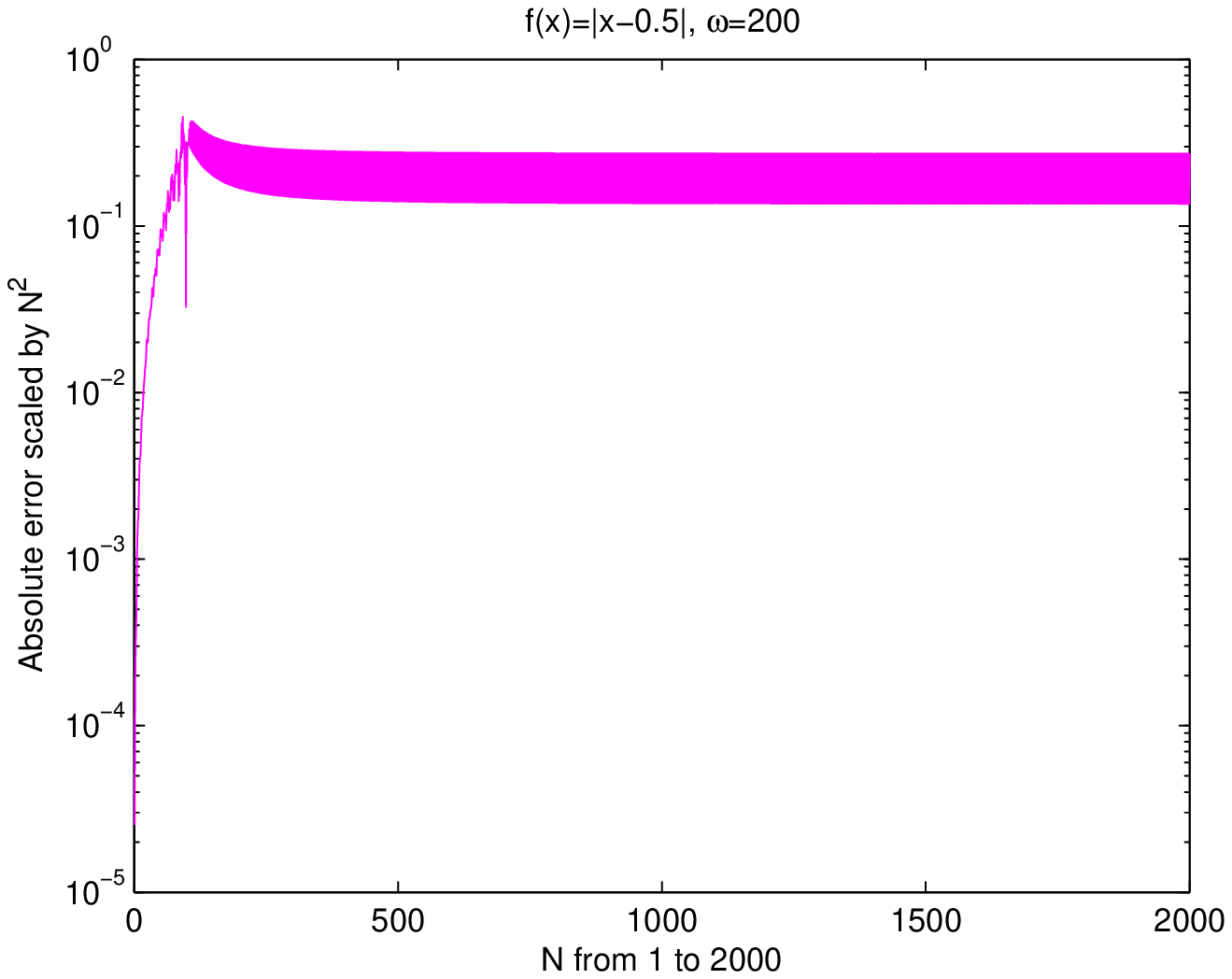}
\includegraphics[scale=0.42]{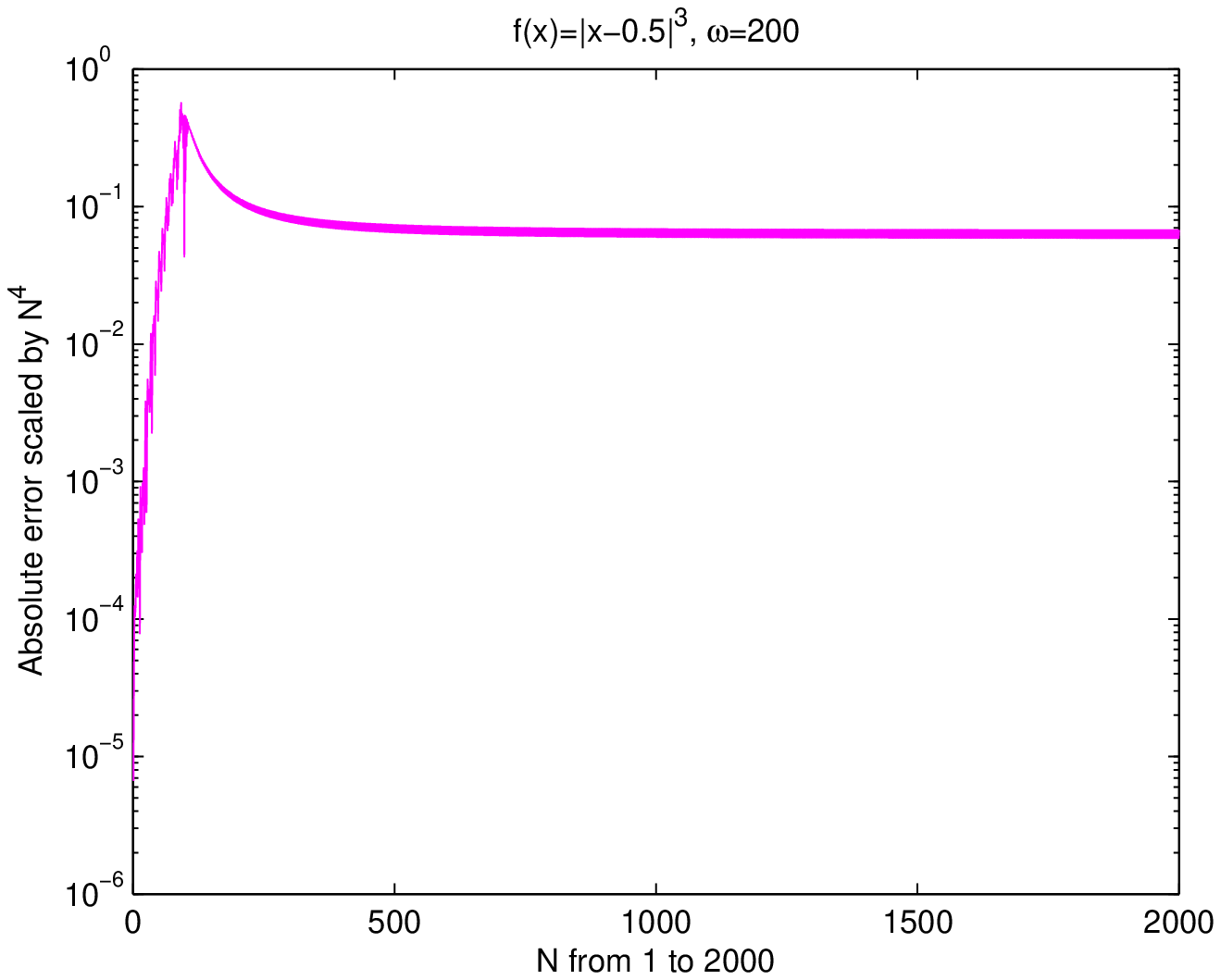}
\caption{Absolute errors scaled by
$N^2$ (left) and $ N^4$ (right) for the integral (\ref{eq:d1a})
for the Clenshaw-Curtis-Filon method with
$\omega=200$, $\alpha=0.2, \beta=0.4$, $N$ from $1$ to $2000$.}
\label{fig:2}
\end{center}
\end{figure}

\begin{figure}[hbtp]
\begin{center}
\includegraphics[scale=0.42]{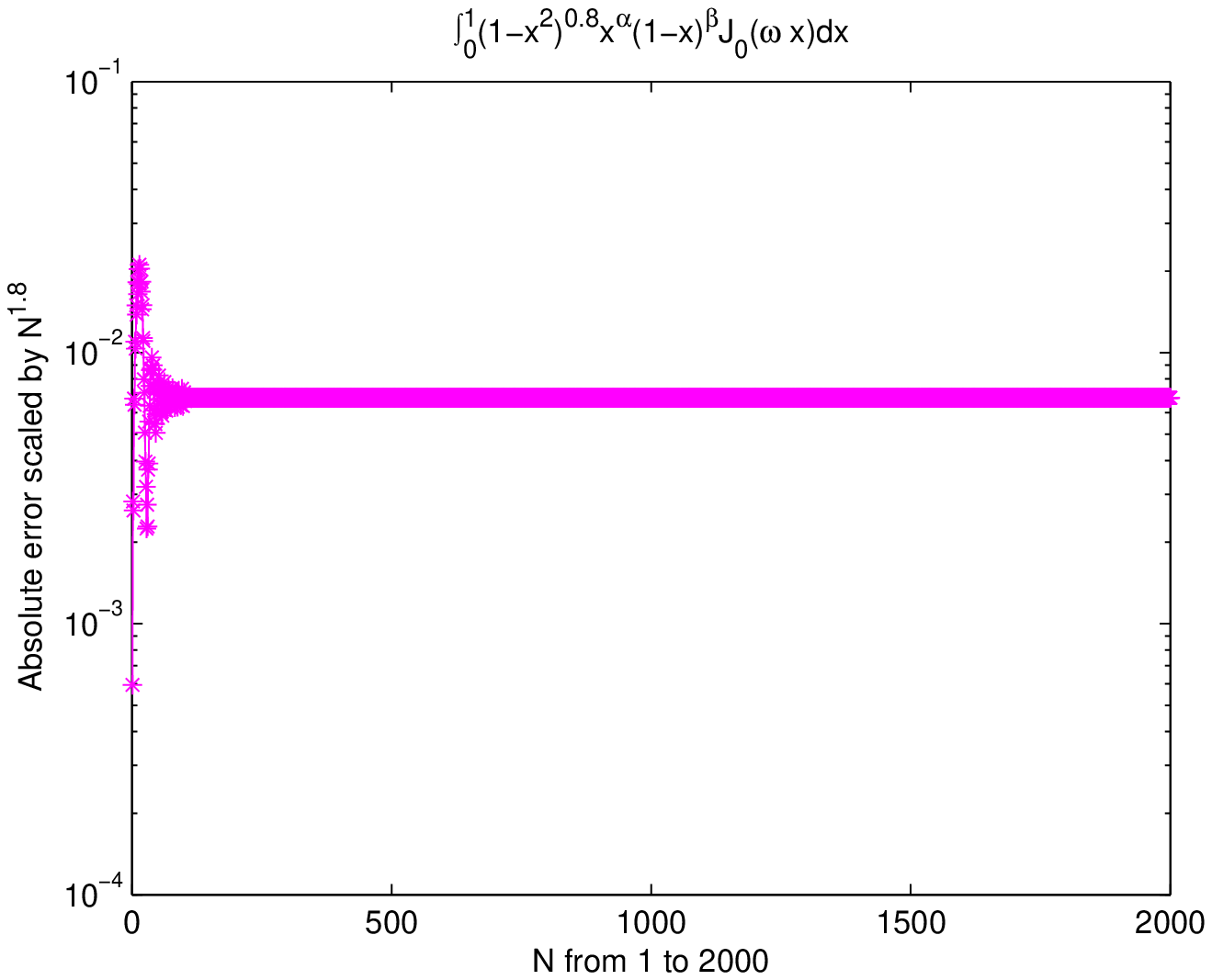}
\includegraphics[scale=0.42]{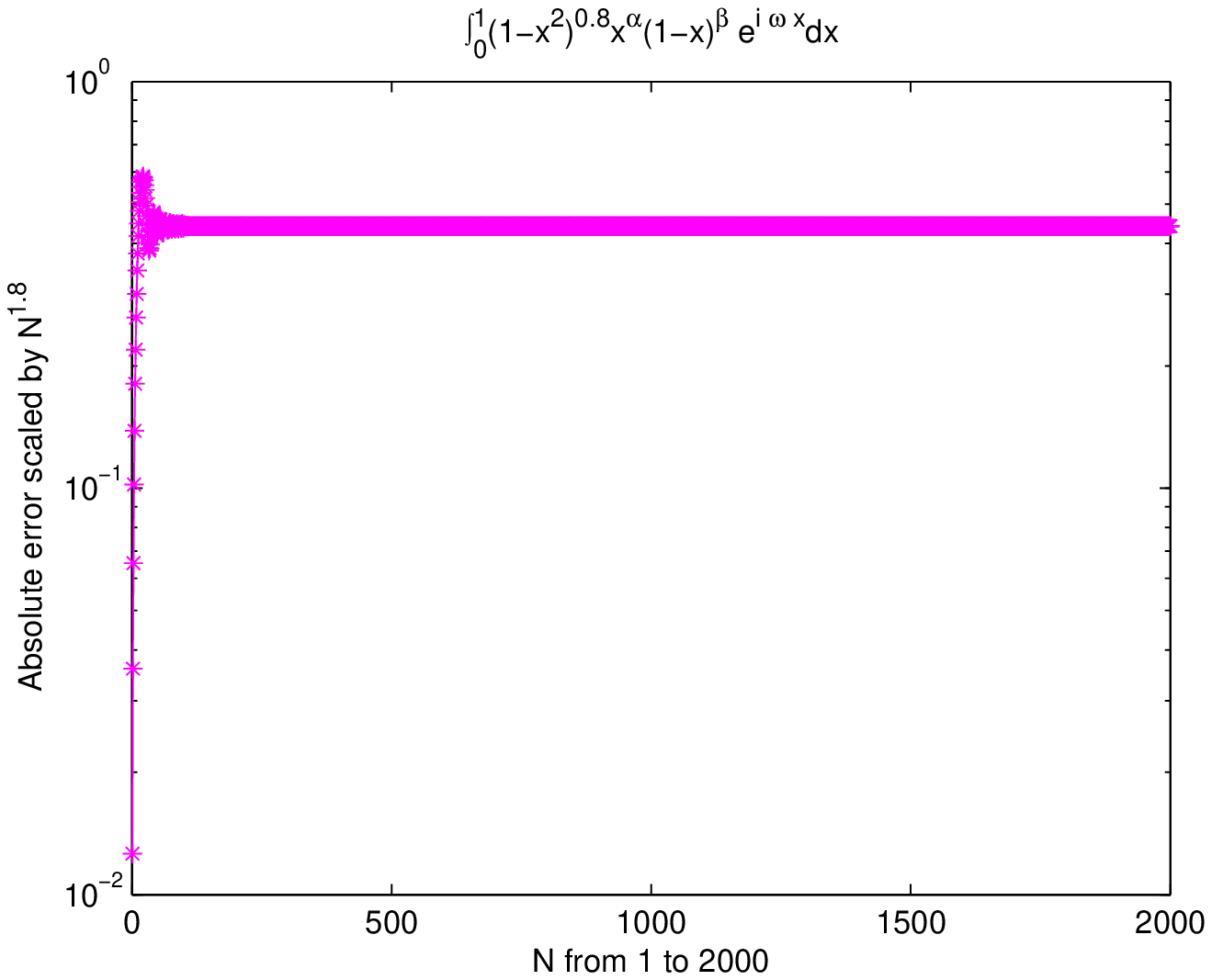}
\caption{Absolute  errors scaled by
$N^{1.8}$  for the integral (\ref{eq:d1b}) (left) and integral (\ref{eq:d1c}) (right)
for the Clenshaw-Curtis-Filon method with
$\omega=200$, $\alpha=-0.8, \beta=-0.9$, $N$ from $1$ to $2000$.}
\label{fig:2a}
\end{center}
\end{figure}

From above two examples and Theorem \ref{th:2}, we can see that the
Clenshaw-Curtis-Filon method is very efficient for the integral (\ref{eq:a1}),
and the accuracy can be improved by adding the number of the interpolation nodes.
Also, the accuracy increases as
$\omega$ increases.

\section{Conclusion}
In this paper, we present a Clenshaw-Curtis-Filon method for integral
(\ref{eq:a1}). The method is based on  a special Lagrange interpolation polynomial at $N+1$ Clenshaw-Curtis points and can be
computed by using $O(N\log N)$ operations. We first give a recurrence relation for the modified moments and present
an efficient algorithm for the moments. Then, we show that the proposed  method is uniformly convergent in
$N$ for fixed $\omega$. The numerical examples illustrate the efficiency and accuracy for this  method.
Moreover, the error bound (\ref{eq:b25}) is optimal on $N$ for  fixed $\omega$.
Here, the word ``optimal''
means that the asymptotic order can be attainable by some functions. Additionally, this result also holds for the Clenshaw-Curtis-Filon
method for integral
$\int_a^b f(x)(x-a)^\alpha(b-x)^\beta e^{i\omega x}dx$.
It should be noted that the accuracy can be improved by adding
the number of the interpolation nodes.

\end{document}